\documentclass[11pt]{amsart}
\usepackage{amsmath,amssymb,amsfonts,mathtools}
\usepackage{float} 
\usepackage{amscd}
\usepackage[margin=3cm]{geometry}
\usepackage{tikz}
\usepackage{graphicx}

\numberwithin{equation}{section}

\newtheorem{theorem}{Theorem}[section]
\newtheorem{proposition}[theorem]{Proposition}
\newtheorem{corollary}[theorem]{Corollary}
\newtheorem{lemma}[theorem]{Lemma}
\theoremstyle{definition}
\newtheorem{definition}[theorem]{Definition}
\newtheorem{remark}[theorem]{Remark}
\newtheorem{example}[theorem]{Example}

\begin{document}

\title[Breaker Chains and Splitting]{ Breaker Chains and Splitting of Interval Submodules in Zigzag Modules}

\author[K. Gongopadhyay and R. Singh]{Krishnendu Gongopadhyay and Ramveer Singh}

\address{Indian Institute of Science Education and Research (IISER) Mohali,
Knowledge City, Sector 81, S.A.S. Nagar 140306, Punjab, India}
\email{krishnendu@iisermohali.ac.in}

\address{Indian Institute of Science Education and Research (IISER) Mohali,
Knowledge City, Sector 81, S.A.S. Nagar 140306, Punjab, India}
\email{ramveersingh5456@gmail.com}

\subjclass[2020]{Primary 16G20; Secondary 55N31}
\keywords{Zigzag modules, interval decomposition, breaker pairs, breaker chains, interval submodules, quiver representations, zigzag persistence}

\begin{abstract}
Finite-dimensional zigzag modules decompose into interval modules.  For an
ordinary persistence module, every fully supported interval submodule of a
restriction splits as a direct summand, whereas this need not hold for a
zigzag module.  We study this failure of splitting.  A breaker pair is a
local configuration of two overlapping interval summands with a backward
arrow at the left boundary of the second interval and a forward arrow at the
right boundary of the first.  A single breaker pair gives a local
nonsplitting interval submodule, but it need not control a prescribed larger
restriction.  We therefore introduce breaker chains.  Our main result shows
that a restriction $V[p,q]$ contains a nonsplitting submodule isomorphic to
$I[p,q]$ if and only if its interval decomposition contains a breaker chain
spanning $[p,q]$.  Equivalently, the proper interval summands $J$ for which
$\operatorname{Hom}(I[p,q],J)\ne0$ have supports covering $[p,q]$.  This
cover formulation gives a constructive description of nonsplitting
embeddings, a trichotomy for their splitting behavior, and an
$O(m\log m)$ algorithm from an oriented barcode with $m$ intervals.
\end{abstract}

\maketitle

\section{Introduction}

Persistence modules and zigzag modules arise naturally in topological data
analysis and in the representation theory of quivers
\cite{AuslanderReitenSmalo,CarlssonSurvey,EdelsbrunnerHarer,Oudot}.  A
persistence module over a field $K$ consists of a sequence of vector spaces
and linear maps all oriented in one direction.  Zigzag modules allow both
forward and backward arrows and are useful when the underlying data does not
evolve monotonically \cite{CarlssonDeSilvaZigzag}.

A fundamental structural result, ultimately a consequence of Gabriel's
theorem for quivers of type $A_n$, states that every finite-dimensional
zigzag module decomposes as a direct sum of interval modules
\cite{Gabriel,CarlssonDeSilvaZigzag}.  Modern treatments and extensions of
interval decomposition can be found in
\cite{Botnan2017,BotnanCrawleyBoevey2020,CrawleyBoevey}.  The interval
summands are the indecomposable building blocks underlying the barcode.

There is a basic distinction between ordinary persistence and zigzag
persistence.  If $V[p,q]$ is a restriction of an ordinary persistence module,
then every submodule isomorphic to the full interval $I[p,q]$ splits in
$V[p,q]$.  In a zigzag module this can fail.  The simplest example is
\[
K \longleftarrow K^2 \longrightarrow K,
\]
where the two arrows are the projections onto the first and second
coordinates. Its interval decomposition is
\[
I[1,2]\oplus I[2,3],
\]
so the barcode contains no interval $I[1,3]$. Nevertheless, the diagonal
line in the middle vertex determines a submodule isomorphic to $I[1,3]$,
giving a representative that persists across all three vertices even though
no single barcode interval accounts for it. This copy of $I[1,3]$ does not
split.
The local mechanism behind this example is a pair of overlapping interval
summands whose boundary orientations point in opposite directions: the
second interval is entered by a backward arrow, while the first interval is
left by a forward arrow.  We call such a configuration a \emph{breaker
pair}.  A breaker pair always produces a nonsplitting interval over the hull
of the two supports.  However, one breaker pair need not produce a fully
supported submodule on a larger prescribed restriction.  For example, for
the zigzag type
\[
1\longleftarrow 2\longrightarrow 3\longrightarrow 4
\]
the module
\[
I[1,2]\oplus I[2,3]\oplus I[4,4]
\]
contains the breaker pair $I[1,2],I[2,3]$, but it contains no submodule
isomorphic to $I[1,4]$.  Thus a fixed interval $[p,q]$ requires a global
configuration built from successive local obstructions.

This leads to the notion of a \emph{breaker chain}: a sequence of interval
summands in which every consecutive pair is a breaker pair.  A chain spans
$[p,q]$ when its first interval begins at $p$ and its last interval ends at
$q$.  Our main result is the following.

\begin{theorem}\label{thm:main}
Let $V[p,q]$ be a restriction of a finite-dimensional zigzag module, and fix
an interval decomposition of $V[p,q]$.  The following are equivalent.
\begin{enumerate}
\item There exists a monomorphism
\[
\iota:I[p,q]\longrightarrow V[p,q]
\]
whose image is not a direct summand of $V[p,q]$.
\item The interval decomposition of $V[p,q]$ contains a breaker chain
spanning $[p,q]$.
\end{enumerate}
\end{theorem}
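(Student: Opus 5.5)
Write $V[p,q]=\bigoplus_{j} J_j$ with $J_j=I[a_j,b_j]$, $p\le a_j\le b_j\le q$. The plan is to reduce everything to the structure of $\operatorname{Hom}(I[p,q],J)$ for an interval module $J$. I will first record the standard fact: $\operatorname{Hom}(I[p,q],I[a,b])\neq 0$ if and only if $[a,b]\subseteq[p,q]$, and at the left end either $a=p$ or the arrow between $a-1$ and $a$ points backward ($a-1\leftarrow a$). At the right end, either $b=q$ or the arrow between $b$ and $b+1$ points forward ($b\to b+1$). In that case the Hom space is one-dimensional, spanned by the map that is the identity on $[a,b]$. The reason is that a morphism out of $I[p,q]$ must have image closed under the outgoing arrows of $I[p,q]$, so the support can only be cut where an arrow enters the cut region from outside. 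Call such $J$ \emph{admissible}.

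A morphism $\iota\colon I[p,q]\to V[p,q]$ is then given by scalars $(\lambda_j)$ on the admissible summands, and $\iota$ is injective if and only if the supports of the summands with $\lambda_j\neq0$ cover $[p,q]$. Next I will show that the image of $\iota$ is a direct summand if and only if some summand with $\lambda_j\neq 0$ is the full interval $I[p,q]$. For one direction, project onto that summand. For the converse, note that a retraction $r\colon V[p,q]\to I[p,q]$ would satisfy $r\iota=\mathrm{id}$. But $r\iota=\sum_j\lambda_j\, r|_{J_j}\circ\phi_j$, and each composite $I[p,q]\to J_j\to I[p,q]$ with $J_j$ a proper interval is zero. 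This holds because it factors through a module of smaller support, and $\operatorname{End}(I[p,q])=K$ is local, so a nonzero such composite would have to be the identity, which is impossible. Hence (1) holds if and only if the proper admissible summands have supports covering $[p,q]$; this is the cover formulation stated in the abstract.

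It remains to match covers by proper admissible intervals with breaker chains spanning $[p,q]$. Given a chain $J_1,\dots,J_r$ spanning $[p,q]$, the first interval starts at $p$. Each consecutive pair overlaps, with a backward arrow at the left boundary of $J_{i+1}$ and a forward arrow at the right boundary of $J_i$. Hence every member has an admissible left end (either $p$ or a backward arrow) and an admissible right end (either $q$ or a forward arrow). The overlaps give a cover of $[p,q]$, and every member is proper because $r\ge 2$. This assumes a chain has at least two members; I will check this against the definition. Conversely, from a cover by proper admissible intervals I will extract a chain greedily:
\begin{itemize}
\item take $J_1$ to be an admissible interval starting at $p$ with maximal right end $b_1<q$;
\item take $J_{i+1}$ to be an admissible interval with $a_{i+1}\le b_i+1$ and maximal right end.
\end{itemize}

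The main obstacle is that a cover only guarantees $a_{i+1}\le b_i+1$, whereas a breaker pair needs a genuine overlap $a_{i+1}\le b_i$ together with the correct orientations. Suppose $a_{i+1}=b_i+1$. Admissibility of $J_i$ forces $b_i\to b_i+1$, and admissibility of $J_{i+1}$ forces $b_i\leftarrow b_i+1$, which is a contradiction. So the overlap is automatic. Moreover, $a_{i+1}>a_i$, since otherwise $J_{i+1}$ would contain $J_i$ and, by maximality, would have been chosen instead. Therefore the left boundary of $J_{i+1}$ lies strictly inside $[p,q]$ and carries a backward arrow, while the right boundary of $J_i$ carries a forward arrow. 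These give exactly the breaker-pair conditions. I will also verify that the greedy choice terminates at $q$ with distinct summands, completing the equivalence.
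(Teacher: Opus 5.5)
Your proposal is correct and follows essentially the same route as the paper. It uses the boundary Hom criterion, the splitting criterion via a retraction, and the cover characterization by proper admissible summands, then extracts a spanning breaker chain from such a cover using the same orientation clash that rules out $a_{i+1}=b_i+1$. The differences are minor: you show each composite $I[p,q]\to J_j\to I[p,q]$ is zero using $\operatorname{End}(I[p,q])=K$ and vanishing off the support of $J_j$, instead of proving $\operatorname{Hom}(J_j,I[p,q])=0$ from the opposite boundary orientations, and you extract the chain by greedy maximal-right-end selection (which the paper uses only in its algorithm) rather than an inclusion-minimal subcover.
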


By the uniqueness of interval decomposition, condition~(2) is independent of
the chosen realization of the direct sum; see Remark~\ref{rem:decomp-independence}.

We begin with the classical morphism calculus for interval representations of
type-$A$ quivers, applied here to the spaces
\[
\operatorname{Hom}(I[p,q],I[b,d])
\qquad\text{and}\qquad
\operatorname{Hom}(I[b,d],I[p,q]).
\]
In the present setting, the resulting boundary-orientation criterion leads to
a global cover characterization: a monomorphism
\[
I[p,q]\hookrightarrow V[p,q]
\]
exists exactly when the supports of the interval summands receiving a nonzero
morphism from $I[p,q]$ cover $[p,q]$, while a nonsplitting monomorphism exists
exactly when the proper such summands already cover $[p,q]$. This boundary criterion is known in the literature and can also be obtained  from the block
decomposition of the two-parameter extension of a zigzag module
\cite{BotnanLesnick}; see Remark~\ref{rem:blocks} for the comparison.  What is
specific to the present work is the global configuration that these local
conditions assemble into.

The key point is that these local morphism conditions lead to a global
obstruction to splitting. Starting with a proper admissible cover and removing
redundant intervals, the remaining intervals form a spanning breaker chain.
Conversely, every spanning breaker chain gives a proper admissible cover.
Thus breaker chains characterize the obstruction to splitting. The cover
description also gives an explicit construction of a nonsplitting copy and an
efficient interval-sweep algorithm. As a consequence, ordinary persistence modules have no breaker chains, since
all arrows point in the same direction. Hence every fully supported interval
submodule of a restriction splits. A single breaker pair is the simplest case
of the theorem and gives the usual diagonal construction on the hull of the two
intervals.

The paper is organised as follows. Section 2 reviews basic notions. Section 3 develops breaker pairs and chains, proves the nonsplitting criterion, and derives an $O(m\log m)$ barcode algorithm.

\section{Persistence and zigzag modules}\label{2}

Let $K$ be a field and let $n\in\mathbb N$.

\begin{definition}[Persistence module]
A \emph{persistence module} of length $n$ over $K$ is a sequence
\[
V_1\xrightarrow{f_1}V_2\xrightarrow{f_2}\cdots
\xrightarrow{f_{n-1}}V_n
\]
of finite-dimensional vector spaces and linear maps.
\end{definition}

\begin{definition}[Zigzag module]
A \emph{zigzag module} $V$ of length $n$ consists of finite-dimensional
vector spaces $V_1,\dots,V_n$ such that for each $1\le i<n$ exactly one of
\[
V_i\longrightarrow V_{i+1},
\qquad
V_i\longleftarrow V_{i+1}
\]
is specified.  We always compare modules having the same arrow type.
\end{definition}

\begin{definition}[Interval module]
For $1\le b\le d\le n$, the \emph{interval module} $I[b,d]$ is defined by
\[
I[b,d]_i=
\begin{cases}
K,& b\le i\le d,\\
0,& \text{otherwise},
\end{cases}
\]
with identity maps between consecutive nonzero spaces and zero maps whenever
one endpoint is zero.  Its support is $[b,d]$.
\end{definition}

\begin{definition}[Submodule and splitting]
A submodule $W\subseteq V$ is a collection of subspaces $W_i\subseteq V_i$
preserved by all structure maps.  A submodule $W$ is a \emph{direct
summand} if there is a submodule $W'$ such that $V=W\oplus W'$.  Equivalently,
the inclusion $W\hookrightarrow V$ admits a retraction $V\to W$.
\end{definition}

\begin{theorem}[Gabriel]\label{thm:gabriel}
For a fixed zigzag type of length $n$, the indecomposable finite-dimensional
modules are precisely the interval modules $I[b,d]$, $1\le b\le d\le n$.
Consequently every finite-dimensional zigzag module is a finite direct sum of
interval modules.  The multiset of interval summands is unique up to
isomorphism and permutation.
\end{theorem}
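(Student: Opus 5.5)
The argument has three parts: indecomposability of intervals, existence of an interval decomposition by induction on $n$, and uniqueness via Krull--Schmidt. The classification of indecomposables as exactly the intervals follows from the last two: an indecomposable module admits an interval decomposition with a single summand.

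\textbf{Intervals are indecomposable.} An endomorphism $\varphi$ of $I[b,d]$ acts by a scalar $\lambda_i$ at each vertex $b\le i\le d$. Commutation with the identity maps between consecutive nonzero spaces forces $\lambda_b=\cdots=\lambda_d$. Hence $\operatorname{End}(I[b,d])\cong K$, which is local, so $I[b,d]$ is indecomposable.

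\textbf{Existence, by induction on $n$.} The case $n=1$ is linear algebra: $V_1\cong K^{\dim V_1}$. For the inductive step, let $V'$ be the restriction of $V$ to vertices $1,\dots,n-1$ and decompose $V'=\bigoplus_k J_k$ into intervals by the induction hypothesis. Suppose the last arrow is forward, $f:V_{n-1}\to V_n$; the backward case is dual, with kernels and images exchanged.
\begin{enumerate}
\item Only the summands ending at $n-1$, say $J_1,\dots,J_r$ with $J_k=I[b_k,n-1]$, interact with $f$. Their generators $v_1,\dots,v_r\in V_{n-1}$ span a subspace $U$.
\item We may change the decomposition by automorphisms of $V'$ of the form $\mathrm{id}+\sum c_{kl}h_{kl}$, where $h_{kl}:J_k\to J_l$ are nonzero morphisms between summands ending at $n-1$.
\item Using these automorphisms, we perform Gaussian elimination on the vectors $f(v_1),\dots,f(v_r)$. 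The goal is that a subfamily maps to linearly independent vectors and every other $v_k$ maps to $0$.
\item Once this is achieved, each $J_k$ with $f(v_k)\neq0$ extends to $I[b_k,n]$. Each $J_k$ with $f(v_k)=0$, and each summand not ending at $n-1$, extends by $0$ at vertex $n$. A complement in $V_n$ to $f(U)$ contributes copies of $I[n,n]$.
\item Checking that these pieces give a direct sum decomposition of $V$ is then routine.
\end{enumerate}

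\textbf{The main obstacle: the elimination order.} Row operations are only legal along nonzero morphisms $J_k\to J_l$, so I first compute $\operatorname{Hom}(I[b,n-1],I[b',n-1])$ for $b<b'$. A morphism must vanish where the target is zero. On the common part $[b',n-1]$ it is a single scalar, and the only constraint is compatibility with the arrow joining $b'-1$ and $b'$.
\begin{itemize}
\item If that arrow points from $b'$ to $b'-1$, only a morphism from the shorter interval $[b',n-1]$ to the longer interval $[b,n-1]$ exists.
\item If it points from $b'-1$ to $b'$, only a morphism from the longer to the shorter exists.
\end{itemize}
In either case, for any two summands ending at $n-1$, at least one direction of nonzero morphism exists. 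Equal left endpoints give isomorphic summands, with morphisms in both directions. Transitivity holds because nonzero morphisms between intervals ending at the same vertex compose to nonzero maps on the common part, so this relation is a total preorder.

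Order $v_1,\dots,v_r$ compatibly with this total preorder. We can then run column-echelon reduction on the family $f(v_1),\dots,f(v_r)$: every needed subtraction $v_l\mapsto v_l-cv_k$ uses a vector $v_k$ from which a morphism $J_k\to J_l$ exists. Each such subtraction is the automorphism $\mathrm{id}-c\,h_{kl}$ of $V'$. This is the step I expect to require the most care, in particular verifying transitivity and that each step really is an automorphism.

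\textbf{Uniqueness.} Since every summand has local endomorphism ring $K$, the Krull--Schmidt theorem gives uniqueness of the multiset of summands up to isomorphism and permutation. Alternatively, one could recover the multiplicities from the dimensions $\dim\operatorname{Hom}(I[b,d],V)$ via the same Hom calculus.
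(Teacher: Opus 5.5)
The paper does not prove this theorem; it quotes it from Gabriel and Carlsson--de Silva, so your argument would supply something the paper does not contain. Your overall plan is the standard one and is sound: indecomposability via $\operatorname{End}(I[b,d])\cong K$, induction on $n$ by adapting the decomposition of $V[1,n-1]$ to the last arrow, and Krull--Schmidt for uniqueness. However, the step you flagged contains a genuine orientation error.

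\textbf{The bullets are reversed.} If the arrow between $b'-1$ and $b'$ points from $b'$ to $b'-1$, the nonzero morphism goes from the longer interval $I[b,n-1]$ onto the shorter one. In the other orientation, the shorter interval embeds into the longer one. For example, for $1\longleftarrow 2$ one has $\operatorname{Hom}(I[1,2],I[2,2])\cong K$ and $\operatorname{Hom}(I[2,2],I[1,2])=0$, in agreement with Lemma~\ref{lem:hom}. Since you only use totality and transitivity of the relation, this slip is harmless by itself.

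\textbf{The automorphism moves the wrong vector.} This is the error that matters. For $h_{kl}\colon J_k\to J_l$ with $h_{kl}(v_k)=\lambda v_l$, the automorphism $\mathrm{id}-c\,\iota_l h_{kl}\pi_k$ fixes $J_l$ and replaces $J_k$ by the graph $\{x-c\,h_{kl}(x)\}$. So it sends $v_k\mapsto v_k-c\lambda v_l$ and leaves $v_l$ unchanged. The move $v_l\mapsto v_l-cv_k$ instead needs a nonzero morphism $J_l\to J_k$. Consequently, with your order ($J_k\to J_l$ nonzero for $k<l$), left-to-right column reduction asks for moves that do not exist.

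The introduction's example $K\leftarrow K^2\to K$ shows the effect. On $[1,2]$, take generators $e_1+e_2$ for $I[1,2]$ and $e_2$ for $I[2,2]$. Your procedure replaces $e_2$ by $-e_1$, whose span is not a submodule of type $I[2,2]$. It then outputs $I[1,3]\oplus I[2,2]$ instead of $I[1,2]\oplus I[2,3]$. If you read the order off your reversed bullets, the two slips happen to cancel, but the justification via $\mathrm{id}-c\,h_{kl}$ is still false.

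\textbf{The repair.} Order the summands ending at $n-1$ so that $\operatorname{Hom}(J_k,J_l)\neq0$ whenever $k<l$. Then every move $v_k\mapsto v_k+\sum_{l>k}c_lv_l$ is a composite of automorphisms $\mathrm{id}+c'\,\iota_l h_{kl}\pi_k$, and summands not ending at $n-1$ are untouched.

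In the forward case, reduce from right to left. For $k=r,\dots,1$, if $f(v_k)\in\operatorname{span}\{f(v_l):l>k\}$, subtract to make $f(v_k)=0$. The surviving nonzero $f(v_k)$ are then independent: a nontrivial relation would put the one of smallest index in the span of those with larger index.

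In the backward case $g\colon V_n\to V_{n-1}$, you need $\operatorname{im}g$ to be spanned by some of the $v_k$. The same moves act on coordinates by $\mathrm{row}_l\mapsto\mathrm{row}_l-c\,\mathrm{row}_k$ for $k<l$. This is precisely the rule you wrote down, but it is a rule for coordinates, not for basis vectors. Put a basis of $\operatorname{im}g$ in reduced column echelon form and clear below each pivot row; then $\operatorname{im}g$ becomes the span of some of the new $v_k$.

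With this correction, the rest goes through: your extension step, the $I[n,n]$ summands coming from a complement of $\operatorname{im}f$ (respectively from $\ker g$), and the Krull--Schmidt uniqueness.
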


\begin{definition}[Restriction]
For $p\le q$, the \emph{restriction} $V[p,q]$ is obtained from $V$ by keeping
only the vector spaces and arrows with indices between $p$ and $q$.
\end{definition}

\begin{definition}[Fully supported interval submodule]
A submodule $W\subseteq V[p,q]$ is called a \emph{fully supported interval
submodule} if $W\cong I[p,q]$.  Thus $W_i$ is one-dimensional for every
$p\le i\le q$, and every structure map of $W$ is an isomorphism.
\end{definition}

For comparison with the zigzag situation, we record the standard
equioriented criterion, see \cite{CarlssonDeSilvaZigzag}. 

\begin{proposition}\label{prop:persistence}
Let
\[
V_1\xrightarrow{f_1}V_2\xrightarrow{f_2}\cdots
\xrightarrow{f_{n-1}}V_n
\]
be a persistence module and let $1\le p\le q\le n$.  The following are
equivalent.
\begin{enumerate}
\item The composite $f_{q-1}\cdots f_p:V_p\to V_q$ is nonzero.
\item There exist nonzero $x_i\in V_i$, $p\le i\le q$, with
$x_{i+1}=f_i(x_i)$.
\item The restriction $V[p,q]$ contains a submodule isomorphic to $I[p,q]$.
\item The restriction $V[p,q]$ contains a direct summand isomorphic to
$I[p,q]$.
\end{enumerate}
\end{proposition}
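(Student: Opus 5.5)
The plan is to prove the cycle (1)$\Rightarrow$(2)$\Rightarrow$(3)$\Rightarrow$(4)$\Rightarrow$(1), with (2)$\Leftrightarrow$(3) as the substantive step. Here $g=f_{q-1}\cdots f_p$ is the composite $V_p\to V_q$.

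\textbf{(1)$\Rightarrow$(2).} Choose $x_p\in V_p$ with $g(x_p)\neq 0$. Define $x_{i+1}=f_i(x_i)$ recursively. Each $x_i$ is nonzero: if some $x_i$ vanished, then every later term, including $x_q=g(x_p)$, would vanish.

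\textbf{(2)$\Rightarrow$(3).} Set $W_i=Kx_i$ for $p\le i\le q$. This family is preserved by the structure maps, since $f_i(Kx_i)=Kx_{i+1}$. Each $f_i$ maps the line $W_i$ isomorphically onto $W_{i+1}$. Hence $W\cong I[p,q]$ via $1\mapsto x_i$.

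\textbf{(4)$\Rightarrow$(1).} A direct summand is in particular a submodule, so it suffices to show that a submodule $W\cong I[p,q]$ forces $g\neq 0$. A generator $w_p$ of $W_p$ maps to a generator of $W_q$, because all structure maps of $I[p,q]$ are isomorphisms. Since $W_q\subseteq V_q$, we get $g(w_p)\neq 0$.

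\textbf{(3)$\Rightarrow$(4).} This is the main step; the cleanest route is through interval decomposition. By Theorem~\ref{thm:gabriel}, write $V[p,q]\cong\bigoplus_j I[b_j,d_j]$ with $p\le b_j\le d_j\le q$, all arrows forward. Because the arrows are forward, the composite $g$ is computed summandwise. On a summand $I[b,d]$, the composite $V_p\to V_q$ is nonzero exactly when $b=p$ and $d=q$, and is zero otherwise. We have just shown that (3) forces $g\neq0$. Therefore some summand equals $I[p,q]$, which is a direct summand isomorphic to $I[p,q]$, giving (4).

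For a decomposition-free proof of (3)$\Rightarrow$(4), retract the submodule directly. Pick a linear functional $\phi$ on $V_q$ with $\phi(w_q)=1$. For $p\le i\le q$ define $r_i\colon V_i\to W_i$ by
\[
r_i(v)=\phi\bigl(f_{q-1}\cdots f_i(v)\bigr)\,w_i,
\]
where $w_i$ is the image of $w_p$ in $V_i$. This family is compatible with the maps, because it factors through the composite to $q$. It restricts to the identity on $W$, because $w_i\mapsto\phi(w_q)w_i=w_i$. Either argument completes the cycle. The only point needing care is that forward orientation makes everything factor through the endpoint $q$, and this is precisely what fails in the zigzag case.
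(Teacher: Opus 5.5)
Your proof is correct, but it takes a different route from the paper. The paper gives no argument where the statement appears; it cites the standard equioriented criterion from Carlsson--de Silva. It later ``recovers'' the substantive part as Corollary~\ref{cor:persistence-splitting}, via Theorem~\ref{thm:main}. Since all arrows are forward, no breaker pair exists, because a breaker pair needs a backward arrow at $b_2$. Hence there is no spanning breaker chain, and every monomorphism $I[p,q]\to V[p,q]$ splits. The elementary implications among (1)--(3) and (4)$\Rightarrow$(3) are left implicit there.

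Your first argument for (3)$\Rightarrow$(4) computes $g$ summandwise in a Gabriel decomposition. It gives only the existence statement (4), which is all the proposition asks.

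Your second argument is the retraction
\[
r_i(v)=\phi\bigl(f_{q-1}\cdots f_i(v)\bigr)\,w_i .
\]
It is both stronger and more elementary: it shows that the given copy $W$ itself is a direct summand. This proves Corollary~\ref{cor:persistence-splitting} directly, without Gabriel's theorem, Lemma~\ref{lem:hom}, the splitting criterion (Lemma~\ref{lem:splitting}), or the breaker-chain theorem.

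The paper's route buys conceptual placement instead: the persistence case appears as the degenerate case of the breaker-chain criterion. Your closing observation, that forward orientation lets everything factor through the endpoint $q$, is the elementary counterpart of the paper's point that backward arrows are exactly what breaker pairs require.
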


Proposition~\ref{prop:persistence} is an existence statement: the existence
of a full interval submodule implies the existence of a full interval
summand.  The question studied below is stronger and concerns whether a
\emph{given embedded copy} $I[p,q]\hookrightarrow V[p,q]$ splits.  In the
equioriented case every such embedding does split; this will follow from Theorem~\ref{thm:main}, see 
Corollary~\ref{cor:persistence-splitting}.

\section{Breaker pairs and breaker chains}\label{3}

We now fix a zigzag type and work inside a restriction $V[p,q]$.

\begin{definition}[Breaker pair]\label{def:breaker-pair}
Let
\[
J_1\cong I[b_1,d_1],
\qquad
J_2\cong I[b_2,d_2]
\]
be interval summands.  The ordered pair $(J_1,J_2)$ is a \emph{breaker pair}
if
\[
b_1<b_2\le d_1<d_2,
\]
the arrow at the left boundary of $J_2$ is backward,
\[
V_{b_2-1}\longleftarrow V_{b_2},
\]
and the arrow at the right boundary of $J_1$ is forward,
\[
V_{d_1}\longrightarrow V_{d_1+1}.
\]
\end{definition}

\begin{figure}[h]
\centering
\begin{tikzpicture}[scale=0.9]
\draw[thick] (0,0)--(10,0);
\draw[very thick] (1,1)--(6,1);
\draw[very thick] (3,2)--(8,2);
\node[left] at (1,1) {$J_1$};
\node[left] at (3,2) {$J_2$};
\node[below] at (1,0) {$b_1$};
\node[below] at (3,0) {$b_2$};
\node[below] at (6,0) {$d_1$};
\node[below] at (8,0) {$d_2$};
\draw[dashed] (3,-0.2)--(3,2.25);
\draw[dashed] (6,-0.2)--(6,2.25);
\draw[->,thick] (3,-0.55)--(2.2,-0.55);
\draw[->,thick] (6,-0.9)--(6.8,-0.9);
\node[below] at (2.6,-0.55) {backward};
\node[below] at (6.4,-0.9) {forward};
\end{tikzpicture}
\caption{A breaker pair.  The two supports overlap, the second interval is
entered by a backward arrow, and the first interval is left by a forward
arrow.}
\label{fig:breaker-pair}
\end{figure}

\begin{definition}[Breaker chain]\label{def:breaker-chain}
A sequence of interval summands
\[
J_1,\dots,J_r,
\qquad r\ge2,
\]
is a \emph{breaker chain} if $(J_j,J_{j+1})$ is a breaker pair for every
$1\le j<r$.  Writing $J_j\cong I[b_j,d_j]$, the chain \emph{spans $[p,q]$}
if
\[
b_1=p,
\qquad
d_r=q.
\]
\end{definition}

Thus a spanning breaker chain satisfies
\[
b_1=p,\qquad d_r=q,
\]
and, for every $1\le j<r$,
\[
b_j<b_{j+1}\le d_j<d_{j+1}.
\]
It has a backward arrow at every $b_j$ for $j\ge2$ and a forward arrow at
every $d_j$ for $j<r$.

\medskip 
The following lemma is a specialization of the standard description of
morphisms between spread modules, see \cite[Proposition 5.5]{BBH2}, \cite[Prop. 2.7]{BBH} (numbering as in the  arXiv versions). 
We note the resulting boundary-orientation criterion for zigzag interval
modules and include a proof for completeness.

\begin{lemma}[Boundary criterion for interval morphisms]\label{lem:hom}
Let $U=I[p,q]$ and $J=I[b,d]$, where
\(
p\le b\le d\le q.
\) Here $U$ and $J$ are regarded as modules over the restricted zigzag type on
$[p,q]$; since both vanish outside $[p,q]$, the Hom-spaces are unchanged if
they are instead viewed as modules over the full zigzag type of length $n$.

Then the following holds. 
\begin{enumerate}
\item $\operatorname{Hom}(U,J)\ne0$ if and only if
\[
b>p\Longrightarrow V_{b-1}\longleftarrow V_b,
\qquad
d<q\Longrightarrow V_d\longrightarrow V_{d+1}.
\]
When these conditions hold,
$\operatorname{Hom}(U,J)\cong K$; otherwise it is zero.
\item $\operatorname{Hom}(J,U)\ne0$ if and only if
\[
b>p\Longrightarrow V_{b-1}\longrightarrow V_b,
\qquad
d<q\Longrightarrow V_d\longleftarrow V_{d+1}.
\]
When these conditions hold,
$\operatorname{Hom}(J,U)\cong K$; otherwise it is zero.
\end{enumerate}
\end{lemma}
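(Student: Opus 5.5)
A morphism $\phi:U\to J$ is a family of scalars: for each $i\in[b,d]$ the component $\phi_i:U_i=K\to J_i=K$ is multiplication by some $\lambda_i\in K$, and $\phi_i=0$ for $i\notin[b,d]$. The plan is to write out the commutativity condition for each arrow $i\leftrightarrow i+1$ with $p\le i<q$, sorted by where the arrow sits relative to $[b,d]$. This gives three kinds of constraints, and part (1) is read off from them directly.

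\emph{Interior arrows.} If $b\le i<i+1\le d$, both structure maps are the identity on $K$ in either module. Commutativity, whichever way the arrow points, forces $\lambda_i=\lambda_{i+1}$, so all $\lambda_i$ equal a single scalar $\lambda$. This already gives $\dim\operatorname{Hom}(U,J)\le1$.

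\emph{Left boundary arrow.} Suppose $b>p$ and look at the arrow between $b-1$ and $b$. Here $U_{b-1}=U_b=K$ with identity map, while $J_{b-1}=0$.
\begin{itemize}
\item If the arrow is forward, $V_{b-1}\to V_b$, commutativity reads $\phi_b\circ\mathrm{id}=(\text{map }J_{b-1}\to J_b)\circ\phi_{b-1}=0$. This forces $\lambda=0$.
\item If the arrow is backward, $V_{b-1}\leftarrow V_b$, the condition reads $\phi_{b-1}\circ\mathrm{id}=0\circ\phi_b$, i.e. $0=0$. There is no constraint.
\end{itemize}

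\emph{Right boundary arrow.} Symmetrically, if $d<q$ and the arrow between $d$ and $d+1$ is backward, we get $\phi_d\circ\mathrm{id}=0$, so $\lambda=0$. If it is forward, the condition is vacuous.

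\emph{Arrows outside $[b-1,d+1]$.} Here both components of $\phi$ vanish, so these conditions are trivial.

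Combining the cases: $\operatorname{Hom}(U,J)$ is the set of $\lambda\in K$ such that $\lambda=0$ whenever a boundary condition fails. So it is $K$ exactly under the stated orientation conditions, and $0$ otherwise. The degenerate cases $b=p$ or $d=q$ impose no boundary constraint, since there is no arrow to check.

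Part (2) is the same computation with the roles reversed. For $\psi:J\to U$ at the left boundary, $J_{b-1}=0$ and $U_{b-1}=U_b=K$.
\begin{itemize}
\item A backward arrow gives $\psi_{b-1}\circ 0=\mathrm{id}\circ\psi_b$, forcing $\lambda=0$.
\item A forward arrow gives $\psi_b\circ 0=0$ on the zero space, which is vacuous.
\end{itemize}
The right boundary is dual. Alternatively, part (2) follows from part (1) via vector-space duality $D=\operatorname{Hom}_K(-,K)$, which reverses all arrows and sends intervals to intervals with the same support.

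I expect no real obstacle here. The only point requiring care is bookkeeping the direction of each commutativity square at the two boundary arrows, and confirming that arrows entirely outside the support impose nothing.
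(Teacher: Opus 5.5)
Your proof is correct and follows the paper's argument: the interior arrows force all components to equal one scalar $\lambda$, arrows away from $[b,d]$ impose only $0=0$, and at each internal boundary arrow one orientation forces $\lambda=0$ while the other imposes nothing (into $[b,d]$ kills $\lambda$ for $U\to J$, out of $[b,d]$ kills it for $J\to U$). Your alternative derivation of part~(2) from part~(1) via the duality $D=\operatorname{Hom}_K(-,K)$, applied over the opposite zigzag type, is also valid, whereas the paper simply repeats the direct computation.
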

\begin{proof}
For each $i$, let $\alpha_i$ denote the arrow joining $i$ and $i+1$.
For a morphism $\varphi=(\varphi_i)\colon X\to Y$, naturality at an arrow
$\alpha\colon s\to t$ means
\[
\varphi_t\circ X(\alpha)=Y(\alpha)\circ\varphi_s. \tag{$*$}
\]

Since $J_i=0$ for $i\notin[b,d]$, all components of a morphism involving $J$
are zero outside $[b,d]$. For $i\in[b,d]$, write
$\varphi_i=\lambda_i\operatorname{id}_K$.

If $b\le i<d$, then both endpoints of $\alpha_i$ lie in $[b,d]$. Hence both
$U$ and $J$ assign $\operatorname{id}_K$ to $\alpha_i$, and naturality gives
$\lambda_i=\lambda_{i+1}$, regardless of the direction of the arrow.
Since $[b,d]$ is connected, all the $\lambda_i$ are equal. Write their common
value as $\lambda$. Thus, in either direction, a morphism is determined by one
scalar, so each Hom-space has dimension at most $1$.

It remains to check the boundary arrows. All other arrows give no condition:
if both endpoints lie outside $[b,d]$, or if one endpoint lies outside
$[p,q]$, then equation $(*)$ is simply $0=0$.

Thus the only possible conditions come from the arrow between $b-1$ and $b$
when $b>p$, and the arrow between $d$ and $d+1$ when $d<q$.

Consider first a morphism $\varphi\colon U\to J$. At a boundary edge, let
$j^*\in[b,d]$ be the endpoint inside $[b,d]$ and
$j\in[p,q]\setminus[b,d]$ the other endpoint. Then
$U_j=K$, $J_j=0$, $\varphi_j=0$, and
$\varphi_{j^*}=\lambda\operatorname{id}_K$.
If the boundary arrow points from $j^*$ to $j$, equation $(*)$ is automatic.
If it points from $j$ to $j^*$, then $(*)$ forces $\lambda=0$.

Hence a nonzero morphism $U\to J$ exists exactly when the boundary arrows
point out of $[b,d]$, that is,
\[
b>p\Longrightarrow V_{b-1}\longleftarrow V_b,
\qquad
d<q\Longrightarrow V_d\longrightarrow V_{d+1}.
\]
In this case the morphism is determined by the arbitrary scalar $\lambda$, so
$\operatorname{Hom}(U,J)\cong K$.

For a morphism $J\to U$, the same argument gives the opposite boundary
conditions:
\[
b>p\Longrightarrow V_{b-1}\longrightarrow V_b,
\qquad
d<q\Longrightarrow V_d\longleftarrow V_{d+1}.
\]
Again the morphism is determined by one scalar, and therefore
$\operatorname{Hom}(J,U)\cong K$ whenever these conditions hold.
\end{proof}

\begin{definition}[Internal boundary]\label{def:internal}
Fix a restriction $[p,q]$ and an interval $[b,d] \subseteq [p,q]$. We call an
endpoint of $[b,d]$ \emph{internal} if it lies in the interior of $[p,q]$, i.e.\
$b > p$ or $d < q$ respectively; the corresponding arrow, between $b-1$ and $b$
or between $d$ and $d+1$, is the \emph{internal boundary arrow}.\end{definition}

\begin{definition}[Proper interval summand]\label{def:proper}
Relative to the fixed restriction $[p,q]$, an interval summand
$J\cong I[b,d]$ is called \emph{proper} if
\[
[b,d]\subsetneq[p,q],
\]
equivalently, if $J\not\cong I[p,q]$.
\end{definition}

\begin{definition}[Admissible interval summand]\label{def:admissible}
Let  $U=I[p,q]$.  An interval summand $J$ of $V[p,q]$ is called
\emph{admissible} if
\(
\operatorname{Hom}(U,J)\ne0.
\)
By Lemma~\ref{lem:hom}, admissibility is determined entirely by the support
of $J$ and the orientations of its internal boundary arrows.
\end{definition}
\begin{remark}[Relation to the two-parameter block picture]\label{rem:blocks}
Lemma~\ref{lem:hom} also has a natural interpretation in the two-parameter
framework of Botnan and Lesnick \cite{BotnanLesnick}. They extend zigzag
modules to $\mathbb{R}^{\mathrm{op}}\times\mathbb{R}$ and show that they
decompose into \emph{blocks}: quadrants or horizontal/vertical bands, with
the type determined by the internal boundary arrows as in
Definition~\ref{def:internal}. The general Hom criterion for interval
modules then specializes to Lemma~\ref{lem:hom}. Thus, the lemma is not new, as already noted for spread modules in \cite{BBH2,BBH};
we give a direct proof because it is elementary and self-contained.

The main content of Theorem~\ref{thm:main} is instead combinatorial: passing
from a proper admissible cover to an inclusion-minimal subcover and showing
that it is a spanning breaker chain. This argument, as well as
Corollary~\ref{cor:trichotomy} and Proposition~\ref{prop:algorithm}, is
independent of the chosen framework. The block viewpoint nevertheless
suggests extensions to infinite zigzags and $\mathbb{R}$-indexed level-set
persistence, which we leave open.
\end{remark}

\begin{example}\label{ex:single-pair-not-global}
A single breaker pair is only a local obstruction.  Consider the zigzag type
\[
1\longleftarrow2\longrightarrow3\longrightarrow4
\]
and
\[
V=I[1,2]\oplus I[2,3]\oplus I[4,4].
\]
The pair $(I[1,2],I[2,3])$ is a breaker pair, but $V$ has no submodule
isomorphic to $I[1,4]$: the structure map $V_3\to V_4$ is zero.  Hence a
single breaker pair cannot characterize nonsplitting on an arbitrary larger
restriction.
\end{example}

Fix an interval decomposition
\[
V[p,q]=\bigoplus_{\alpha\in A}J_\alpha
\]
and let $\pi_\alpha:V[p,q]\to J_\alpha$ denote the canonical projection.
For a monomorphism
\[
\iota:U=I[p,q]\longrightarrow V[p,q],
\]
let 
\[
\iota_\alpha=\pi_\alpha\circ\iota.
\]
We call $J_\alpha$ \emph{active} for $\iota$ if $\iota_\alpha\ne0$.

\begin{lemma}[Active intervals cover]\label{lem:cover}
The supports of the active interval summands cover $[p,q]$.
\end{lemma}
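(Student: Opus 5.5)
Fix an index $i\in[p,q]$; we must find an active summand whose support contains $i$. Since $\iota$ is a monomorphism and $U_i=I[p,q]_i=K$, the component $\iota_i\colon K\to V_i$ is injective, so $\iota_i(1)\neq 0$ in $V_i$. The fixed decomposition gives, at each vertex, a vector space decomposition
\[
V_i=\bigoplus_{\alpha\in A}(J_\alpha)_i ,
\]
and the components of the projections satisfy $(\pi_\alpha)_i\colon V_i\to (J_\alpha)_i$ with $\sum_\alpha (\pi_\alpha)_i=\operatorname{id}_{V_i}$ (identifying each $(J_\alpha)_i$ with its image under the canonical inclusion). Hence
\[
\iota_i(1)=\sum_{\alpha\in A}(\pi_\alpha)_i\bigl(\iota_i(1)\bigr),
\]
and since the left side is nonzero, some term $(\pi_\alpha)_i(\iota_i(1))=(\iota_\alpha)_i(1)$ is nonzero.

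For this $\alpha$, the morphism $\iota_\alpha=\pi_\alpha\circ\iota$ has a nonzero component at vertex $i$, so $\iota_\alpha\neq0$ and $J_\alpha$ is active. Moreover $(J_\alpha)_i\neq 0$, because it contains the nonzero vector $(\iota_\alpha)_i(1)$. Therefore $i$ lies in the support of $J_\alpha$. As $i$ was arbitrary, the supports of the active summands cover $[p,q]$.

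No step is a genuine obstacle. The one point needing care is that $\pi_\alpha$ is a morphism of modules, so $\iota_\alpha$ is a morphism whose vertexwise components are $(\pi_\alpha)_i\circ\iota_i$; this is exactly what lets the vertexwise nonvanishing give both activity of $J_\alpha$ and $i\in\operatorname{supp}J_\alpha$. Note also that each active $J_\alpha$ is admissible in the sense of Definition~\ref{def:admissible}, since $\iota_\alpha\in\operatorname{Hom}(U,J_\alpha)$ is nonzero. This is presumably how the lemma will be combined with Lemma~\ref{lem:hom} later.
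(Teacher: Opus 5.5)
Your proof is correct and is essentially the paper's argument. Injectivity of $\iota_i$ on the one-dimensional space $U_i$ forces some component $(\iota_\alpha)_i$ in $V_i=\bigoplus_{\alpha}(J_\alpha)_i$ to be nonzero, so $J_\alpha$ is active and $i$ lies in its support; your remark that $(J_\alpha)_i\neq 0$ just states the support claim a little more explicitly than the paper does.
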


\begin{proof}
For each $i\in[p,q]$, the component
\[
\iota_i:K\longrightarrow V_i
\]
is injective and hence nonzero.  Under the identification
\[
V_i=\bigoplus_{\alpha\in A}(J_\alpha)_i,
\]
the map $\iota_i$ is the tuple
\[
\iota_i=\bigl((\iota_\alpha)_i\bigr)_{\alpha\in A}.
\]
Since this tuple is nonzero, at least one component
$(\iota_\alpha)_i$ is nonzero.  Then $J_\alpha$ is
active and its support contains $i$.
\end{proof}

\begin{lemma}[Splitting criterion]\label{lem:splitting}
The monomorphism $\iota:U\to V[p,q]$ splits if and only if there is an active
summand $J_\alpha\cong U$.
\end{lemma}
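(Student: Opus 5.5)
We prove both directions of Lemma~\ref{lem:splitting} using the fact that $\operatorname{End}(U)\cong K$. This holds by Lemma~\ref{lem:hom} with $b=p$, $d=q$, since there are no internal boundary conditions. In particular, every morphism $U\to U$ is a scalar multiple of the identity, so an endomorphism of $U$ is either zero or an isomorphism.

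\emph{Sufficiency.} Suppose some active summand $J_\alpha$ admits an isomorphism $\theta\colon J_\alpha\to U$. Then $\theta\circ\iota_\alpha=\theta\circ\pi_\alpha\circ\iota\colon U\to U$ is a nonzero endomorphism, because $\iota_\alpha\ne0$ and $\theta$ is injective. Hence $\theta\circ\iota_\alpha=\lambda\operatorname{id}_U$ with $\lambda\ne0$. The map
\[
r=\lambda^{-1}\,\theta\circ\pi_\alpha\colon V[p,q]\longrightarrow U
\]
is a module morphism with $r\circ\iota=\operatorname{id}_U$. By the definition of splitting, $\iota(U)$ is therefore a direct summand, with complement $\ker r$.

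\emph{Necessity.} Suppose $r\colon V[p,q]\to U$ satisfies $r\circ\iota=\operatorname{id}_U$. Write $\operatorname{id}_V=\sum_\alpha \epsilon_\alpha\pi_\alpha$, where $\epsilon_\alpha$ is the inclusion of $J_\alpha$. Then
\[
\operatorname{id}_U=r\circ\iota=\sum_\alpha (r\circ\epsilon_\alpha)\circ\iota_\alpha .
\]
Each term $(r\circ\epsilon_\alpha)\circ\iota_\alpha$ lies in $\operatorname{End}(U)\cong K$, so it equals $c_\alpha\operatorname{id}_U$ for some scalar $c_\alpha$. The sum of the $c_\alpha$ is $1$, so some $c_\alpha\ne0$. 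For that $\alpha$ we get $\iota_\alpha\ne0$, so $J_\alpha$ is active. Moreover, $\iota_\alpha\colon U\to J_\alpha$ has a left inverse $c_\alpha^{-1}r\epsilon_\alpha$, so it is a split monomorphism.

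The main step is to conclude from this that $J_\alpha\cong U$. Split injectivity makes $U$ a direct summand of $J_\alpha$. Since $J_\alpha$ is indecomposable (Theorem~\ref{thm:gabriel}) and $U\ne0$, this forces $\iota_\alpha$ to be an isomorphism. A more elementary route compares dimensions: $\iota_\alpha$ is injective at every vertex of $[p,q]$, so $J_\alpha$ is nonzero on all of $[p,q]$, and hence $J_\alpha\cong I[p,q]$.

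The only subtle point is this passage from a nonzero composite scalar to an isomorphism. The observation that every endomorphism of $U$ is a scalar, and therefore either zero or invertible, handles it cleanly. The rest of the argument is routine bookkeeping with projections and inclusions.
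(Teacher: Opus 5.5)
Your proof is correct. Your sufficiency direction is the same as the paper's: a nonzero endomorphism of $U$ is a nonzero scalar, so $\pi_\alpha$ followed by an inverse gives a retraction.

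For necessity you take a genuinely different route.

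\textbf{The paper's route.} It expands $\operatorname{id}_U=\sum_\alpha r_\alpha\iota_\alpha$ and shows that every proper active term vanishes, using both parts of Lemma~\ref{lem:hom}. Activity forces the internal boundary arrows of $J_\alpha$ to point outward. A nonzero map $J_\alpha\to U$ would need at least one of them to point inward. Hence $\operatorname{Hom}(J_\alpha,U)=0$ for every proper active summand.

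\textbf{Your route.} You use only $\operatorname{End}(U)\cong K$. Some term $c_\alpha\operatorname{id}_U$ must be nonzero, so $\iota_\alpha$ has a left inverse. Then $\iota_\alpha$ is injective at every vertex of $[p,q]$, which forces $J_\alpha$ to have full support. Your alternative, via indecomposability of $J_\alpha$, also works.

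\textbf{Comparison.} Your argument is the standard Krull--Schmidt-type fact: a split monomorphism out of a module with local endomorphism ring into a direct sum already splits through a single summand. So it needs no orientation analysis and would work for any such $U$, not just interval modules. The paper's argument is more special, but it gives extra information in the spirit of the paper. Proper admissible summands admit no nonzero maps back to $U$ at all, which explains concretely why they can never contribute to a retraction.
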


\begin{proof}
If $J_\alpha\cong U$ is active, then
\[
\pi_\alpha\iota:U\longrightarrow J_\alpha\cong U
\]
is a nonzero endomorphism of an interval module.  Hence it is multiplication
by a nonzero scalar and therefore an isomorphism.  Composing $\pi_\alpha$
with its inverse gives a retraction of $\iota$.

Conversely, suppose $\iota$ splits, and let
\[
r:V[p,q]\longrightarrow U
\]
be a retraction.  Write $r_\alpha=r|_{J_\alpha}$.  Then
\[
\operatorname{id}_U=r\iota=\sum_{\alpha\in A}r_\alpha\iota_\alpha.
\]
If $J_\alpha$ is inactive, the corresponding term is zero.  Suppose
$J_\alpha=I[b,d]$ is active and proper.  Activity means
$\iota_\alpha\ne0$, hence
\[
\operatorname{Hom}(U,J_\alpha)\ne0.
\]
By Lemma~\ref{lem:hom}(1), every internal left boundary of $J_\alpha$ is
therefore backward and every internal right boundary is forward.  Since
$J_\alpha$ is proper, at least one boundary is internal.  At any such
boundary, the orientation required by Lemma~\ref{lem:hom}(2) for a nonzero
map $J_\alpha\to U$ is the opposite orientation.  Consequently
\[
\operatorname{Hom}(J_\alpha,U)=0,
\]
and hence $r_\alpha=0$.  Therefore the identity above cannot hold unless some
active summand is isomorphic to $U$.
\end{proof}

\begin{proposition}[Cover characterization]\label{prop:cover-characterization}
Let
\[
V[p,q]=\bigoplus_{\alpha\in A}J_\alpha,
\qquad U=I[p,q].
\]
Then:
\begin{enumerate}
\item There exists a monomorphism $U\to V[p,q]$ if and only if the supports
of the admissible interval summands cover $[p,q]$.
\item There exists a nonsplitting monomorphism $U\to V[p,q]$ if and only if
the supports of the proper admissible interval summands cover $[p,q]$.
\end{enumerate}
\end{proposition}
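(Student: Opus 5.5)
Both parts follow the same pattern: the "only if" direction comes from Lemma~\ref{lem:cover} (active summands cover $[p,q]$), and the "if" direction comes from an explicit construction that sums the canonical nonzero maps into the relevant summands.

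\emph{Necessity.} Let $\iota:U\to V[p,q]$ be a monomorphism. Every active summand $J_\alpha$ satisfies $\iota_\alpha\ne0$, so $\operatorname{Hom}(U,J_\alpha)\ne0$ and $J_\alpha$ is admissible. By Lemma~\ref{lem:cover} the active summands cover $[p,q]$, and hence so do the admissible ones; this proves the forward direction of (1). If $\iota$ is moreover nonsplitting, Lemma~\ref{lem:splitting} says that no active summand is isomorphic to $U$. Hence every active summand is proper and admissible, and these proper admissible summands cover $[p,q]$. This proves the forward direction of (2).

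\emph{Sufficiency.} Let $S\subseteq A$ be a set of admissible summands whose supports cover $[p,q]$. For (2) we take $S$ to consist of proper admissible summands. By Lemma~\ref{lem:hom}(1), each $\alpha\in S$ carries a canonical nonzero morphism $\phi_\alpha:U\to J_\alpha$. At each vertex of $\operatorname{supp}J_\alpha$, this morphism sends $1\in K$ to the basis vector $1$ of $(J_\alpha)_i=K$, and it is zero elsewhere. Define
\[
\iota=\sum_{\alpha\in S}\phi_\alpha:U\longrightarrow\bigoplus_{\alpha\in A}J_\alpha=V[p,q].
\]
This is a morphism because each component is one. At a vertex $i\in[p,q]$, choose $\alpha\in S$ whose support contains $i$; the $\alpha$-coordinate of $\iota_i(1)$ is then $1\ne0$. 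Thus every $\iota_i:K\to V_i$ is injective, and $\iota$ is a monomorphism, which proves (1).

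For (2), the active summands of this $\iota$ are exactly those indexed by $S$. Since all of them are proper, Lemma~\ref{lem:splitting} shows that $\iota$ does not split.

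The only point needing care is that Lemma~\ref{lem:splitting} refers to the same fixed decomposition, so activity must be read off from $\pi_\alpha\iota=\phi_\alpha$ for $\alpha\in S$ and $0$ otherwise. This holds by construction, so I expect no real obstacle. The substantive input is Lemma~\ref{lem:splitting}, which is already available.
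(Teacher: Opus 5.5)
Your proposal is correct and follows the paper's proof essentially verbatim. For necessity, you use Lemma~\ref{lem:cover} and Lemma~\ref{lem:splitting}: active summands are admissible and, when $\iota$ is nonsplitting, proper. For sufficiency, you take the direct sum of nonzero maps $U\to J_\alpha$ over a (proper) admissible cover, whose active summands are exactly the chosen ones. Your normalization $\lambda=1$ and your remark that activity is read off from $\pi_\alpha\iota=\phi_\alpha$ just make explicit what the paper leaves implicit.
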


\begin{proof}
Suppose first that $\iota:U\to V[p,q]$ is a monomorphism.  Every active
summand is admissible by definition, and Lemma~\ref{lem:cover} shows that the
active supports cover $[p,q]$.  This proves the necessity in part~(1).  If
$\iota$ does not split, Lemma~\ref{lem:splitting} shows that no active
summand is isomorphic to $U$; hence all active summands are proper, proving
the necessity in part~(2).

Conversely, let $\mathcal C$ be any collection of admissible interval
summands whose supports cover $[p,q]$.  For each $J\in\mathcal C$, choose a
nonzero morphism
\[
\phi_J:U\longrightarrow J.
\]
Their direct sum defines
\[
\Phi=(\phi_J)_{J\in\mathcal C}:
U\longrightarrow\bigoplus_{J\in\mathcal C}J
\hookrightarrow V[p,q].
\]
At each vertex $i\in[p,q]$, some $J\in\mathcal C$ contains $i$, and a
nonzero map $U\to J$ is nonzero at every vertex of the support of $J$.
Therefore $\Phi_i\ne0$.  Since $U_i$ is one-dimensional, $\Phi_i$ is
injective for every $i$, so $\Phi$ is a monomorphism.  This proves the
sufficiency in part~(1).

If all members of $\mathcal C$ are proper, then no summand isomorphic to
$U$ is active for $\Phi$.  Lemma~\ref{lem:splitting} implies that $\Phi$
does not split, proving part~(2).
\end{proof}
\begin{corollary}\label{cor:homcount}
Let $V[p,q]=\bigoplus_{\alpha\in A}J_\alpha$ be an interval decomposition and
$U=I[p,q]$. Then
\[
  \dim_K \operatorname{Hom}\bigl(U,\,V[p,q]\bigr)
  \;=\; \#\{\alpha\in A : J_\alpha \text{ is admissible}\},
\]
counted with multiplicity. In particular this count depends only on $V[p,q]$,
not on the chosen decomposition.
\end{corollary}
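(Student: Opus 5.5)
The plan is to compute $\operatorname{Hom}(U,V[p,q])$ summand by summand, using additivity of $\operatorname{Hom}$ in the second variable together with Lemma~\ref{lem:hom}. Since $V[p,q]=\bigoplus_{\alpha\in A}J_\alpha$ is a finite direct sum, the canonical isomorphism
\[
\operatorname{Hom}\bigl(U,V[p,q]\bigr)\;\cong\;\bigoplus_{\alpha\in A}\operatorname{Hom}(U,J_\alpha)
\]
holds, given explicitly by $\iota\mapsto(\pi_\alpha\circ\iota)_{\alpha\in A}$. Its inverse sends a tuple $(\phi_\alpha)$ to $\sum_\alpha \sigma_\alpha\phi_\alpha$, where $\sigma_\alpha$ denotes the inclusion of $J_\alpha$. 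Dimensions therefore add, and it suffices to compute $\dim_K\operatorname{Hom}(U,J_\alpha)$ for each $\alpha$.

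Each summand satisfies $J_\alpha\cong I[b,d]$ with $p\le b\le d\le q$, because $J_\alpha$ is a summand of $V[p,q]$, a module over the restricted zigzag type. Lemma~\ref{lem:hom}(1) then applies directly. It gives $\operatorname{Hom}(U,J_\alpha)\cong K$ exactly when the internal boundary arrows of $J_\alpha$ point outward, and $\operatorname{Hom}(U,J_\alpha)=0$ otherwise. By Definition~\ref{def:admissible}, the first case is precisely admissibility. So each summand contributes $1$ or $0$ according as it is admissible or not, and summing over $\alpha$, with repeated isomorphism types counted separately, gives the stated formula.

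For the independence statement there are two routes, and either suffices. The left-hand side $\dim_K\operatorname{Hom}(U,V[p,q])$ is visibly an invariant of $V[p,q]$ alone. Alternatively, admissibility depends only on the isomorphism type (support) of $J_\alpha$, and by Theorem~\ref{thm:gabriel} the multiset of supports is independent of the decomposition. The only point needing any care is ensuring that Lemma~\ref{lem:hom} is applied with $[b,d]\subseteq[p,q]$, which holds automatically here; there is no real obstacle.
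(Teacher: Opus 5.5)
Your proposal is correct and matches the paper's proof: additivity of $\operatorname{Hom}(U,-)$ over the finite direct sum, then Lemma~\ref{lem:hom}(1) giving $K$ or $0$ per summand, with independence following because the left-hand side makes no reference to the decomposition. Your alternative independence argument via the uniqueness in Theorem~\ref{thm:gabriel} is also valid but not needed.
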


\begin{proof}
Since $\operatorname{Hom}(U,-)$ commutes with finite direct sums,
$\operatorname{Hom}(U,V[p,q])\cong\bigoplus_{\alpha}\operatorname{Hom}(U,J_\alpha)$.
By Lemma~\ref{lem:hom}(1) each term is $K$ if $J_\alpha$ is admissible and $0$
otherwise. The last assertion follows because the left-hand side is defined
without reference to a decomposition.
\end{proof}
\begin{remark}\label{rem:minimal-cover-chain}
A breaker chain should not be identified with a minimal cover as an object.
Rather, Proposition~\ref{prop:cover-characterization} says that the existence
of a nonsplitting copy is equivalent to the existence of a proper admissible
cover.  Choosing an inclusion-minimal such cover produces a spanning breaker
chain in the proof below.  Conversely, every spanning breaker chain is a
proper admissible cover.  Thus the two notions are equivalent at the level of
existence, while a given breaker chain need not itself be an inclusion-minimal
cover if it is not a spanning breaker chain.
\end{remark}

We can now prove the main theorem.

\begin{proof}[\bf Proof of Theorem~\ref{thm:main}]
If $p=q$, every one-dimensional subspace of the vector space $V_p$ has a
complement, so both statements are false.  Assume $p<q$.

Suppose first that there exists a nonsplitting monomorphism
$U=I[p,q]\to V[p,q]$.  By
Proposition~\ref{prop:cover-characterization}(2), the proper admissible
summands have supports covering $[p,q]$.  Choose an inclusion-minimal
subcollection of proper admissible summands whose supports still cover
$[p,q]$, and write them as
\[
J_1=I[b_1,d_1],\dots,J_r=I[b_r,d_r]
\]
ordered by increasing left endpoint.  Necessarily $r\ge2$: if $r=1$, its
support would have to contain both $p$ and $q$, hence would equal $[p,q]$,
contradicting properness.

Minimality implies
\[
b_1<b_2<\cdots<b_r
\quad\text{and}\quad
d_1<d_2<\cdots<d_r.
\]
Indeed, if $b_i\le b_j$ but $d_i\ge d_j$ for $i\ne j$, then the support of
one chosen interval is contained in that of another, contradicting
minimality.  Since the chosen intervals cover the two endpoints,
\[
b_1=p,
\qquad
d_r=q.
\]
They also satisfy
\[
b_{j+1}\le d_j+1
\qquad(1\le j<r),
\]
otherwise there would be a gap in the cover.

We claim that equality cannot occur.  If
$b_{j+1}=d_j+1$, then $J_j$ is admissible and ends before $q$, so
Lemma~\ref{lem:hom}(1) forces
\[
V_{d_j}\longrightarrow V_{d_j+1}.
\]
On the other hand, $J_{j+1}$ is admissible and begins after $p$, so the
same lemma forces the very same arrow to be
\[
V_{d_j}\longleftarrow V_{d_j+1},
\]
a contradiction.  Hence
\[
b_{j+1}\le d_j.
\]

Finally, admissibility and Lemma~\ref{lem:hom}(1) give a backward arrow entering
$b_{j+1}$ and a forward arrow leaving $d_j$.  Thus
\[
b_j<b_{j+1}\le d_j<d_{j+1},
\]
and $(J_j,J_{j+1})$ is a breaker pair for every $j$.  Therefore
$J_1,\dots,J_r$ is a breaker chain spanning $[p,q]$.

Conversely, suppose
\[
J_1=I[b_1,d_1],\dots,J_r=I[b_r,d_r]
\]
is a breaker chain spanning $[p,q]$.  The breaker orientations and
Lemma~\ref{lem:hom}(1) show that every $J_j$ is admissible.  Since
$r\ge2$ and the endpoints strictly advance along the chain, every $J_j$ is
proper.  The chain supports cover $[p,q]$.  Therefore
Proposition~\ref{prop:cover-characterization}(2) gives a nonsplitting
monomorphism $U\to V[p,q]$.  This proves the converse implication.
\end{proof}

\begin{remark}[Independence of the interval decomposition]\label{rem:decomp-independence}
For a fixed zigzag type, Theorem~\ref{thm:gabriel} gives uniqueness of the
multiset of indecomposable interval summands up to isomorphism and
permutation.  Hence the multiset of supports $[b,d]$ is determined by the
barcode, while the orientations of all boundary arrows are determined by the
fixed zigzag type.  Consequently the statements that the decomposition
contains an admissible cover or a spanning breaker chain are independent of
the chosen realization of the direct-sum decomposition.  They depend only on
the oriented barcode of $V[p,q]$.
\end{remark}

\begin{remark}
Theorem~\ref{thm:main} separates the local and global aspects of the
obstruction.  A breaker pair is the elementary local configuration.  To
obstruct splitting of a full interval on a prescribed restriction $[p,q]$,
these local configurations must link together into a chain spanning the
whole restriction.
\end{remark}
\subsection{Consequence  of the breaker-chain criterion}
\begin{corollary}[Local breaker-pair obstruction]\label{cor:pair}
Let $(J_1,J_2)$ be a breaker pair with
\[
J_1\cong I[b_1,d_1],
\qquad
J_2\cong I[b_2,d_2].
\]
Then $J_1\oplus J_2$, restricted to $[b_1,d_2]$, contains a nonsplitting
submodule isomorphic to $I[b_1,d_2]$.  Moreover, $J_1\oplus J_2$ has no
direct summand isomorphic to $I[b_1,d_2]$.
\end{corollary}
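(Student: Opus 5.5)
We apply Theorem~\ref{thm:main} to the module $W=(J_1\oplus J_2)[b_1,d_2]$, with the restriction taken to be $[p,q]=[b_1,d_2]$. Both supports lie in $[b_1,d_2]$, so restricting does nothing to either summand. The decomposition of $W$ is therefore exactly $J_1\oplus J_2$, with the arrow orientations inherited from the ambient zigzag type.

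The first step is to check that $J_1,J_2$ is a breaker chain spanning $[b_1,d_2]$. This is immediate from the definitions. The breaker-pair inequalities $b_1<b_2\le d_1<d_2$ and the two boundary orientations are hypotheses, and since $r=2$ they are all that Definition~\ref{def:breaker-chain} requires. Spanning holds because $b_1=p$ and $d_2=q$.

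The one point needing care is that the breaker-pair orientations refer to arrows $V_{b_2-1}\leftarrow V_{b_2}$ and $V_{d_1}\to V_{d_1+1}$. Both arrows lie inside $[b_1,d_2]$ because $b_1<b_2$ and $d_1<d_2$. Hence they survive the restriction, and these are exactly the internal boundary arrows that Lemma~\ref{lem:hom} examines. Theorem~\ref{thm:main}, direction (2)$\Rightarrow$(1), now gives a monomorphism $I[b_1,d_2]\to W$ whose image is not a direct summand. This is the first claim.

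For the second claim, suppose $J_1\oplus J_2$ had a direct summand isomorphic to $I[b_1,d_2]$. Decompose that summand's complement into intervals by Theorem~\ref{thm:gabriel}. This produces an interval decomposition of $J_1\oplus J_2$ that contains $I[b_1,d_2]$. By the uniqueness part of Theorem~\ref{thm:gabriel}, the multiset of summands must be $\{I[b_1,d_1],I[b_2,d_2]\}$. Neither of these equals $I[b_1,d_2]$, because $d_1<d_2$ and $b_2>b_1$. This contradiction proves the second claim.

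Alternatively, one can construct the embedding directly instead of citing Theorem~\ref{thm:main}. Lemma~\ref{lem:hom}(1) shows that both $J_1$ and $J_2$ are proper admissible summands of $W$. Their supports cover $[b_1,d_2]$ because $b_2\le d_1$. Proposition~\ref{prop:cover-characterization}(2) then yields the nonsplitting monomorphism, namely the diagonal map $(\phi_{J_1},\phi_{J_2})$.

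I do not expect a real obstacle here. The only subtle point is the bookkeeping just described: checking that the boundary arrows named in the breaker-pair definition are internal to $[b_1,d_2]$, so that admissibility is computed relative to the correct restriction.
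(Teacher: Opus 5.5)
Your proposal is correct and follows essentially the same route as the paper. You check that $J_1,J_2$ is a breaker chain spanning $[b_1,d_2]$ and apply Theorem~\ref{thm:main}, then use the uniqueness part of Theorem~\ref{thm:gabriel} to rule out a summand isomorphic to $I[b_1,d_2]$; your extra bookkeeping on the boundary arrows and your alternative via Proposition~\ref{prop:cover-characterization}(2) are both sound.
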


\begin{proof}
The two intervals form a breaker chain spanning $[b_1,d_2]$, so the first
statement follows from Theorem~\ref{thm:main}.  For the second statement,
$J_1$ and $J_2$ are the indecomposable summands of $J_1\oplus J_2$, up to
permutation, and neither is isomorphic to $I[b_1,d_2]$.  The uniqueness part
of Theorem~\ref{thm:gabriel} therefore excludes such a direct summand.
\end{proof}

\begin{remark}[Explicit diagonal construction]\label{rem:diagonal}
Corollary~\ref{cor:pair} has the concrete form used in the basic example.
Choose compatible nonzero generators $x_i$ for $J_1$ and $y_i$ for $J_2$.
Then the one-dimensional spaces
\[
S_i=
\begin{cases}
\operatorname{span}\{x_i\},& b_1\le i<b_2,\\
\operatorname{span}\{x_i+y_i\},& b_2\le i\le d_1,\\
\operatorname{span}\{y_i\},& d_1<i\le d_2
\end{cases}
\]
form a submodule $S\cong I[b_1,d_2]$.  The two breaker orientations are
exactly what makes the two transition maps preserve these lines.
\end{remark}

\begin{corollary}[Trichotomy for fully supported embeddings]\label{cor:trichotomy}
Let $U=I[p,q]$ and let $V[p,q]$ be fixed.
\begin{enumerate}
\item Every monomorphism $U\to V[p,q]$ splits if and only if there is no
spanning breaker chain.
\item Both a splitting and a nonsplitting monomorphism $U\to V[p,q]$ exist
if and only if $V[p,q]$ has a direct summand isomorphic to $U$ and also
contains a spanning breaker chain.
\item There exists at least one monomorphism $U\to V[p,q]$, and every such
monomorphism is nonsplitting, if and only if $V[p,q]$ contains a spanning
breaker chain but has no direct summand isomorphic to $U$.
\end{enumerate}
\end{corollary}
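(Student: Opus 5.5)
The plan is to reduce all three statements to Theorem~\ref{thm:main}, Lemma~\ref{lem:splitting} and Proposition~\ref{prop:cover-characterization}. The only new ingredient is a characterization of when a \emph{splitting} monomorphism $U\to V[p,q]$ exists. We prove that a splitting monomorphism exists if and only if $V[p,q]$ has a direct summand isomorphic to $U$.

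For the forward direction, take a splitting $\iota$. Its image is a direct summand isomorphic to $U$.

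For the converse, suppose $V[p,q]$ has such a summand. By the uniqueness part of Theorem~\ref{thm:gabriel}, the fixed interval decomposition contains some $J_\alpha\cong U$. The inclusion of $J_\alpha$, precomposed with an isomorphism $U\cong J_\alpha$, is a monomorphism, and $J_\alpha$ is active for it. Lemma~\ref{lem:splitting} then shows that it splits.

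Call this observation (S). Theorem~\ref{thm:main} supplies the companion fact (N): a nonsplitting monomorphism exists if and only if there is a spanning breaker chain. By Remark~\ref{rem:decomp-independence}, this condition does not depend on the chosen decomposition.

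The three parts then follow by Boolean bookkeeping.
\begin{enumerate}
\item The statement ``every monomorphism splits'' is the negation of ``a nonsplitting monomorphism exists''. By (N), this holds exactly when there is no spanning breaker chain. This is true whether or not any monomorphism exists at all; in the vacuous case, (N) also gives no chain.
\item Combining (S) and (N) directly gives the second statement.
\item First assume a chain exists and no summand $\cong U$ exists. By (N), a nonsplitting monomorphism exists, so at least one monomorphism exists. By (S), no monomorphism splits, so every monomorphism is nonsplitting. Conversely, assume a monomorphism exists and all monomorphisms are nonsplitting. Then a nonsplitting one exists, so (N) gives a spanning chain. Since no monomorphism splits, (S) shows there is no summand $\cong U$.
\end{enumerate}

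The only point needing care is the vacuous case in (1) and the ``at least one'' clause in (3), which ensure the trichotomy is stated correctly even when $\operatorname{Hom}$ contains no monomorphism. The step I expect to need the most attention is (S): we must pass from ``some direct summand isomorphic to $U$'' to ``some summand of the \emph{fixed} decomposition'' via Krull--Schmidt uniqueness before Lemma~\ref{lem:splitting} can be applied.

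Optionally, Proposition~\ref{prop:cover-characterization}(1) can be used to note the following. The cases (1) with monomorphisms existing, (2) and (3) are mutually exclusive. Together with the case that no monomorphism exists, they exhaust all possibilities.
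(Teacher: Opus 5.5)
Your proposal is correct and follows essentially the same route as the paper. You show that a splitting monomorphism exists exactly when $V[p,q]$ has a direct summand isomorphic to $U$, combine this with Theorem~\ref{thm:main} for nonsplitting monomorphisms, and finish with Boolean bookkeeping; your handling of the vacuous case in (1) and the ``at least one'' clause in (3) is right. The one difference is the converse of (S): you go through Krull--Schmidt uniqueness and Lemma~\ref{lem:splitting}, whereas the paper just notes that projecting onto a summand $W\cong U$ along its complement is already a retraction, so that step needs no special care.
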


\begin{proof}
Part~(1) is Theorem~\ref{thm:main}.  For part~(2), a direct summand
$U\subseteq V[p,q]$ gives a splitting inclusion, while a spanning breaker
chain gives a nonsplitting monomorphism by Theorem~\ref{thm:main}.
Conversely, a splitting monomorphism identifies $U$ with a direct summand,
and a nonsplitting monomorphism gives a spanning breaker chain.

For part~(3), a spanning breaker chain supplies a monomorphism, and the
absence of a direct summand isomorphic to $U$ rules out every splitting
monomorphism.  Conversely, the existence of a nonsplitting monomorphism gives
a spanning breaker chain, while the existence of a direct summand $U$ would
itself give a splitting monomorphism, contrary to the hypothesis.
\end{proof}
We recover Proposition ~ \ref{prop:persistence} as a corollary of the theorem.

\begin{corollary} \label{cor:persistence-splitting}
Let $V[p,q]$ be a restriction of an ordinary persistence module.  Every
monomorphism
\[
I[p,q]\longrightarrow V[p,q]
\]
splits.  Equivalently, every fully supported interval submodule of
$V[p,q]$ is a direct summand.
\end{corollary}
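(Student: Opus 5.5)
The plan is to deduce the corollary from Theorem~\ref{thm:main}. It suffices to show that a restriction $V[p,q]$ of an ordinary persistence module admits no breaker chain spanning $[p,q]$; in fact it admits no breaker pair at all. Once this is established, condition~(2) of Theorem~\ref{thm:main} fails, so condition~(1) fails as well: every monomorphism $I[p,q]\to V[p,q]$ splits.

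The key step is to show that no breaker pair exists. Suppose $(J_1,J_2)$ were a breaker pair with $J_1\cong I[b_1,d_1]$ and $J_2\cong I[b_2,d_2]$, so that $b_1<b_2\le d_1<d_2$. Definition~\ref{def:breaker-pair} then requires a backward arrow $V_{b_2-1}\longleftarrow V_{b_2}$. Since $b_2>b_1\ge p$, this arrow lies inside the restriction. In an ordinary persistence module every arrow is forward, which gives a contradiction. (The forward condition at $d_1$ is automatically satisfied; the backward condition alone rules the pair out.) Because a breaker chain has $r\ge2$ and therefore contains at least one breaker pair, no breaker chain exists, spanning or otherwise.

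For the case $p=q$, the proof of Theorem~\ref{thm:main} already notes that every line in $V_p$ has a complement, so nothing further is needed there.

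Finally, the reformulation ``every fully supported interval submodule is a direct summand'' follows directly. A submodule $W\cong I[p,q]$ is the image of a monomorphism $I[p,q]\to V[p,q]$, and such a monomorphism splits exactly when its image is a direct summand, by the retraction characterization in the definition of splitting.

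I expect no real obstacle here. The only care needed is to check that the backward-arrow condition refers to an arrow actually present in the restriction, which the strict inequality $b_2>p$ guarantees. If desired, the argument could be cross-checked against Proposition~\ref{prop:persistence} as a sanity check.
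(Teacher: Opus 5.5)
Your proposal is correct and matches the paper's proof: since every arrow in a persistence module is forward, the backward-arrow condition at $b_2$ in Definition~\ref{def:breaker-pair} can never hold, so there are no breaker pairs and hence no breaker chains, and Theorem~\ref{thm:main} gives splitting. Your extra remarks on the case $p=q$ and on the submodule reformulation are only harmless added detail.
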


\begin{proof}
All arrows are forward, so no breaker pair, and hence no breaker chain, can
occur.  The conclusion follows from Theorem~\ref{thm:main}.
\end{proof}
\subsection{Construction and Algorithm}
\begin{proposition}[Explicit realization of a breaker chain]\label{prop:explicit-chain}
Let
\[
J_1=I[b_1,d_1],\dots,J_r=I[b_r,d_r]
\]
be a breaker chain spanning $[p,q]$.  Choose, for each $j$, compatible
nonzero generators $x_i^{(j)}\in(J_j)_i$ on its support.  For
$p\le i\le q$, set
\[
z_i=\sum_{\substack{1\le j\le r\\ i\in[b_j,d_j]}}x_i^{(j)}
\qquad\text{and}\qquad
S_i=\operatorname{span}\{z_i\}.
\]
Then $S=(S_i)$ is a submodule of $\bigoplus_{j=1}^rJ_j$ isomorphic to
$I[p,q]$, and its inclusion is nonsplitting.
\end{proposition}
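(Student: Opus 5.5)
I will show that $z=(z_i)$ is exactly the image of the morphism $\Phi=(\phi_{J_j})_j$ built in the proof of Proposition~\ref{prop:cover-characterization}, with each $\phi_{J_j}$ normalized to send a fixed generator $u_i$ of $U_i=K$ to $x^{(j)}_i$ for $i\in[b_j,d_j]$ and to $0$ elsewhere. Then $S$ is a submodule isomorphic to $I[p,q]$, and nonsplitting follows from Lemma~\ref{lem:splitting}.

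\textbf{Step 1: each $\phi_{J_j}$ is a morphism.} The breaker conditions give a backward arrow at $b_j$ for $j\ge2$ and a forward arrow at $d_j$ for $j<r$. The remaining boundaries are not internal, since $b_1=p$ and $d_r=q$. Hence every $J_j$ is admissible by Lemma~\ref{lem:hom}(1). The proof of that lemma shows that the nonzero morphisms $U\to J_j$ are given by a constant scalar on $[b_j,d_j]$. Because the $x^{(j)}_i$ are compatible generators, meaning the identity structure maps of $J_j$ carry $x^{(j)}_i$ to $x^{(j)}_{i\pm1}$, the assignment $u_i\mapsto x^{(j)}_i$ is such a morphism.

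\textbf{Step 2: $\Phi$ is a monomorphism with image $S$.} Set $\Phi=\sum_j\phi_{J_j}\colon U\to\bigoplus_j J_j$. Then $\Phi_i(u_i)=z_i$ by definition. The chain supports cover $[p,q]$: we have $b_1=p$, $d_r=q$, and $b_{j+1}\le d_j$. So for each $i$ some $x^{(j)}_i$ appears in $z_i$. These $x^{(j)}_i$ lie in distinct direct summands, so $z_i\neq0$. Therefore $\Phi_i$ is injective at every vertex. The image of a morphism is a submodule, so $S=\operatorname{im}\Phi$ is a submodule and $\Phi\colon U\to S$ is an isomorphism. The only thing to be careful about is the transition regions, but there is nothing to check by hand. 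Naturality of $\Phi$ already guarantees that the structure maps send $z_i$ to $z_{i\pm1}$; for instance, the forward arrow at $d_j$ kills the $x^{(j)}$ component. This is the multi-interval version of Remark~\ref{rem:diagonal}.

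\textbf{Step 3: nonsplitting.} Every $J_j$ is proper, since $r\ge2$ and the endpoints strictly increase, so no $J_j$ has support $[p,q]$. Consequently no summand isomorphic to $U$ is active for $\Phi$. Lemma~\ref{lem:splitting}, applied to the decomposition $\bigoplus_j J_j$, shows that $\Phi$ does not split. So the inclusion $S\hookrightarrow\bigoplus_j J_j$ is not a direct summand.

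The main obstacle is Step 1. The argument depends on $\phi_{J_j}$ being exactly the morphism with coefficient $1$ on the chosen generators, so that the compatibility of the $x^{(j)}$ is what makes the image equal to $\operatorname{span}\{z_i\}$ rather than some rescaled line. I expect this to follow directly from the one-scalar description in the proof of Lemma~\ref{lem:hom}.
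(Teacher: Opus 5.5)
Your proposal is correct and follows the paper's route. You realize $S$ as the image of $\Phi=(\phi_j)_j$, with each $\phi_j$ coming from the admissibility of $J_j$ via Lemma~\ref{lem:hom}(1); you get injectivity from the cover; and you obtain nonsplitting from Lemma~\ref{lem:splitting} because every $J_j$ is proper. The one cosmetic difference is that you normalize each $\phi_j$ to hit the given compatible generators, whereas the paper rescales the generators. Your version has the small advantage of leaving the $z_i$ fixed in the statement unchanged.
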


\begin{proof}
By the breaker-chain conditions and Lemma~\ref{lem:hom}(1), for every $j$
there is a nonzero map
\[
\phi_j:I[p,q]\longrightarrow J_j.
\]
Scale the chosen generators so that, at every $i\in[b_j,d_j]$, the map
$\phi_j$ sends the standard generator of $I[p,q]_i$ to $x_i^{(j)}$.  The
direct sum
\[
\Phi=(\phi_1,\dots,\phi_r):I[p,q]\longrightarrow\bigoplus_{j=1}^rJ_j
\]
therefore has image $S_i=\operatorname{span}\{z_i\}$ at vertex $i$.  Since
the chain supports cover $[p,q]$, each $z_i$ is nonzero, so $\Phi$ is a
monomorphism and $S\cong I[p,q]$.  Every $J_j$ is proper, hence the splitting
criterion, Lemma~\ref{lem:splitting}, shows that this inclusion does not
split.
\end{proof}

\begin{proposition}[Barcode Algorithm]\label{prop:algorithm}
Suppose the oriented barcode of $V[p,q]$ contains $m$ interval summands.
Whether $V[p,q]$ contains a nonsplitting copy of $I[p,q]$ can be decided in
$O(m\log m)$ time.  In the same complexity one can produce a spanning breaker
chain of minimum possible length, when one exists.
\end{proposition}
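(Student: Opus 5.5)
By Theorem~\ref{thm:main} together with Proposition~\ref{prop:cover-characterization}(2), the decision problem is equivalent to asking whether the supports of the proper admissible summands cover $[p,q]$. My plan is therefore to reduce to an interval-covering problem and solve it greedily, while keeping enough bookkeeping to read off a chain.

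\emph{Step 1 (filtering).} For each of the $m$ oriented intervals $[b,d]$, decide in $O(1)$ time whether it is proper and admissible. By Lemma~\ref{lem:hom}(1), admissibility means: if $b>p$ the arrow between $b-1$ and $b$ is backward, and if $d<q$ the arrow between $d$ and $d+1$ is forward. In an oriented barcode these orientations are stored with the interval, so the test is constant time. Let $\mathcal A$ be the surviving family; this step costs $O(m)$.

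\emph{Step 2 (greedy cover).} Sort $\mathcal A$ by left endpoint in $O(m\log m)$ time. Then run the standard minimum interval cover sweep. Keep a current reach $R$, initially $p-1$. While $R<q$, scan all intervals with $b\le R+1$, pick the one with the largest $d$, and set $R$ to that $d$. If no such interval gives $d>R$, report that no cover exists. Each interval is scanned once, so after sorting the sweep is linear.

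By the usual exchange argument, the greedy output is a minimum-cardinality subcover. A minimum cover is inclusion-minimal, so the argument in the proof of Theorem~\ref{thm:main} applies. Its members, ordered by left endpoint, have strictly increasing endpoints, satisfy $b_1=p$ and $d_r=q$, and have no adjacency $b_{j+1}=d_j+1$, because admissibility would force opposite orientations on a single arrow. Hence the output is a spanning breaker chain.

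\emph{Step 3 (minimum length).} Every spanning breaker chain is a proper admissible cover, and every minimum cover is a chain. So the minimum length of a spanning chain equals the minimum cover size, and the greedy sweep attains it.

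The only delicate point is the claim that the greedy choice yields a minimum cover, and hence an inclusion-minimal one. A raw greedy run could in principle include a redundant interval, but taking the farthest reach at each step rules this out. Any interval chosen later starts at a point beyond the earlier reach, so no chosen interval contains another. I would state the exchange argument briefly to close this gap.
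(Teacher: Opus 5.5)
Your proposal is correct and follows essentially the same route as the paper: filter to the proper admissible intervals via Lemma~\ref{lem:hom}(1), use Proposition~\ref{prop:cover-characterization}(2) to reduce to an interval-cover question, and run the sorted greedy sweep. You then note that a minimum-cardinality cover is inclusion-minimal, so the argument in the proof of Theorem~\ref{thm:main} makes it a spanning breaker chain of minimum length. (Your remark that no chosen interval contains another does not by itself give inclusion-minimality, but the exchange argument you invoke does; in fact, non-containment together with the cover property already suffices for the chain conclusion.)
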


\begin{proof}
By Lemma~\ref{lem:hom}(1), one first filters the $m$ summands to retain only
the proper admissible intervals; this requires only the support endpoints and
the two boundary orientations, so it takes $O(m)$ time once the barcode is
given.  By Proposition~\ref{prop:cover-characterization}(2), the problem is
then exactly whether these intervals cover the discrete interval $[p,q]$.

Sort the retained intervals by increasing left endpoint, breaking ties by
decreasing right endpoint.  Starting with the first uncovered vertex, choose
among all intervals whose left endpoint is at most that vertex one having
maximal right endpoint, and repeat from the next uncovered vertex.  This is
the standard greedy interval-cover algorithm.  It either detects a gap or
returns a cover with the minimum possible number of intervals.  Sorting costs
$O(m\log m)$ and the subsequent sweep is linear.

If a cover is returned, discard any redundant selected interval if necessary;
equivalently, take an inclusion-minimal subcover.  The proof of
Theorem~\ref{thm:main} shows that, when ordered by increasing left endpoint,
such a proper admissible cover is a spanning breaker chain.  For a
minimum-cardinality greedy cover no interval is redundant, so the returned
chain already has minimum length.  Conversely every spanning breaker chain
is a proper admissible cover.  Hence the minimum number of intervals in a
proper admissible cover equals the minimum length of a spanning breaker
chain.
\end{proof}

\section*{Declarations} 

{\bf Ethical Approval}. Not applicable.

{\bf Competing Interest}. Not applicable.

{\bf Funding}. Not applicable.

{\bf Authors' Contributions.} All authors have contributed equally. 

{\bf Availability of data and materials}. Not applicable.

\end{document}